\def\Ddots{\mathinner{\mkern1mu\raise\p@
\vbox{\kern7\p@\hbox{.}}\mkern2mu
\raise4\p@\hbox{.}\mkern2mu\raise7\p@\hbox{.}\mkern1mu}}
\def\Xint#1{\mathchoice
{\XXint\displaystyle\textstyle{#1}}%
{\XXint\textstyle\scriptstyle{#1}}%
{\XXint\scriptstyle\scriptscriptstyle{#1}}%
{\XXint\scriptscriptstyle\scriptscriptstyle{#1}}%
\!\int}
\def\XXint#1#2#3{{\setbox0=\hbox{$#1{#2#3}{\int}$}
\vcenter{\hbox{$#2#3$}}\kern-.5\wd0}}
\def\dashint{\Xint-}
\newtheorem{theorem}{Theorem}[section]
\newtheorem{corollary}[theorem]{Corollary}
\theoremstyle{definition}
\newtheorem{proposition}[theorem]{Proposition}
\def\bey{\begin{eqnarray*}}
\def\eey{\end{eqnarray*}}
\DeclareMathOperator{\supp}{supp}
\newcommand{\eps}{{\varepsilon}}
\newcommand{\nR}{{\mathbb R}}
\def\({\left(}
\def\){\right)}
\def\[{\left[}
\def\]{\right]}
\def\<{\langle}
\def\>{\rangle}
\newcommand{\cz}{Calder\'on-Zygmund\ }
\begin{document}

\subjclass[2010]{Primary: 42B20}
\keywords{Bilinear operators, singular integrals, Calder\'on-Zygmund theory, commutators, characterization of BMO}

\title[Characterizations of BMO]{Characterizations of BMO through commutators of bilinear singular integral operators}

\date{\today}

\author[Lucas Chaffee]{Lucas Chaffee}
\address{%
Department of Mathematics\\
University of Kansas\\
Lawrence, KS 66045, USA}
\email{chaffel@ku.edu}

\thanks{This work was partially supported by NSF grant DMS 1069015.}

\begin{abstract}
In this paper we characterize BMO in terms of the boundedness of commutators of various bilinear singular integral operators with pointwise multiplication. In particular, we study commutators of a wide class of bilinear operators of convolution type, including bilinear \cz operators and the bilinear fractional integral operators.
\end{abstract}

\maketitle

\section{Introduction and statements of main results}
Recall that the space of functions with bounded mean oscillation, denoted BMO, consists of all locally integrable functions, $b$, such that $$\|b\|_*:=\sup_{Q}\dashint_Q|b(x)-b_Q|~dx<\infty,$$ where $Q$ is a cube with sides parallel to the axes, and $b_Q$ is the average of $b$ over $Q$.

In the linear setting, we define the commutator of a function, $b$, with an operator, $T$, acting on a function $f$ as $$[b,T](f)(x):=b(x)T(f)(x)-T(bf)(x).$$ In \cite{CRW}, Coifman, Rochberg, and Weis showed that when $T$ is the Hilbert Transform, then the linear commutator is bounded if and only if $b\in$ BMO. Note that for $f \in L^p$ and $g \in L^{p'}$ we have
$$\<[b,T](f),g\>=\<T(f)g-fT^*(g),b\>,$$
where $T^*$ denotes the transpose of $T$. In this light, we see that the characterization of the boundedness of the commutator with BMO functions means $T(f)g-fT^*(g)$, which is clearly in $L^1$, is in fact in the Hardy space $H^1$, the pre-dual of BMO. This allowed Coifman et al. to achieve a factorization of $H^1$ in a higher dimensional setting than had previously been done. Janson and Uchiyama each extended this characterization of BMO, in \cite{SJ} and \cite{Uch} respectively, to commutators of \cz operators of convolution type with smooth homogeneous kernels, and Chanillo, \cite{Cha}, did the same for commutators of the fractional integral operator with the restriction that $n-\alpha$ be an even integer. The boundedness of commutators in the multilinear setting has been extensively studied already, as in P\'erez and Torres' \cite{PT}, Tang's \cite{T}, Lerner, Ombrosi, P\'erez, Torres, and Trujillo-Gonz\'alez's \cite{LOPTTG}, Chen and Xue's \cite{CX}, and P\'erez, Pradolini, Torres, and Trujillo-Gonz\'alez's \cite{PPTT} to name a few. However, it has been an open question until now whether they can be used to characterize BMO. In this paper we will indeed show that the characterizations of BMO can be extended to a multilinear setting. For readability we will state and prove our results only for the bilinear cases.

The bilinear commutators we will be examining will be of the following forms $$[b,T]_1(f,g)(x):=bT(f,g)(x)-T(bf,g)(x),$$ and $$[b,T]_2(f,g)(x):=bT(f,g)(x)-T(f,bg)(x),$$ where $b$ is a locally integral function and $T$ is a bilinear singular integral operator.

Before we state our first result we wish to first define an $m$-linear \cz operator, as they are important to the background work of this paper and will arise in Corollary \ref{czchar}, which is itself a main result of this paper. In order to define m-linear \cz operators, we first define the class of \cz kernels. Let $K(x,y_1,...,y_m)$ be a locally integrable function defined away from the diagonal $x=y_1=...=y_m$. If for some parameters $A$ and $\eps$, both positive, we have $$|K(y_0,y_1,...,y_m)|\leq\frac{A}{\(\sum_{k,l=0}^m|y_k-y_l|\)^{mn}}$$
and $$|K(y_0,...,y_j,...,y_m)-K(y_0,...,y_j',...,y_m)|\leq\frac{A|y_j-y_j'|^\eps}{\(\sum_{k,l=0}^m|y_k-y_l|\)^{mn+\eps}}$$
whenever $0\leq j\leq m$ and $|y_j-y_j'|\leq\frac12\max_{0\leq k\leq m}|y_j-y_k|$, then we say $K$ is an $m$-linear \cz kernel. Suppose for some $m$-linear operator, $T$, defined on $L^{p_1}\times...\times L^{p_m}$, we have
$$T(f_1,...,f_m)(x)=\int K(x,y_1,...,y_m)\prod_{j=1}^mf_j(y_j)d\vec y$$ for all $x\not\in\bigcap_{j=1}^m\supp\(f_j\),$ where $K$ is a \cz kernel. Then if $$T:L^{p_1}\times...\times L^{p_m}\to L^p,$$ for some $1<p_1,...,p_m$ satisfying  $\frac1p=\sum_{j=1}^m\frac1{p_j}$, we say $T$ is an $m$-linear \cz operator. Many basic properties of these operators were thoroughly studied by L. Grafakos and R. H. Torres in \cite{GT}. Lastly, we say that an operator is of `convolution type' if the kernel  $K(x,y,z)$ is actually of the form $K(x-y,x-z)$. Our first theorem can now be stated as follows,

\begin{theorem}\label{CZOBMO}
For $b\in L^1_{loc}(\nR^n)$, and $T$ a bilinear operator defined on $L^{p_1}\times L^{p_2}$ which can be represented as $$T(f,g)(x)=\int K(x-y,x-z)f(y)g(z)dydz$$ for all $x\not\in\supp(f)\cap\supp(g),$ where $K$ is a homogeneous kernel of degree $-2n+\alpha$, and such that on some ball, $B\subset\nR^{2n}$ we have that the Fourier series of $\frac1K$ is absolutely convergent. We then have that for $1>\frac1q=\frac1{p_1}+\frac1{p_2}-\frac\alpha n$, and for $j=1$ or $2$,
$$[b,T]_j:L^{p_1}\times L^{p_2}\to L^q\implies b\in BMO(\nR^n)$$.
\end{theorem}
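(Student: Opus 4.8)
The plan is to reduce the implication to a lower bound on mean oscillation by testing the commutator against well-chosen inputs, following the Janson–Uchiyama strategy adapted to the bilinear convolution setting. First I would exploit the hypothesis that $1/K$ has an absolutely convergent Fourier series on some ball $B \subset \nR^{2n}$: pick a ball $B = B((y_0,z_0),\rho)$ on which $\tfrac1K(\xi,\eta) = \sum_{m} a_m e^{i v_m \cdot (\xi,\eta)}$ with $\sum |a_m| < \infty$, where $v_m \in \nR^{2n}$ are the (rescaled) frequency vectors. Since $K$ is homogeneous of degree $-2n+\alpha$, this expansion on $B$ transfers by scaling to an expansion valid on every dilate of $B$ away from the origin, which is the mechanism that will let us localize to an arbitrary cube $Q$.

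The core computation: fix a cube $Q$ with center $x_Q$ and side length $\ell$. Choose points so that the relevant differences $x-y$, $x-z$ lie (after normalization) inside the ball $B$ on which $\tfrac1K$ has the absolutely convergent expansion; concretely, take $y$ ranging over $Q$ and $z$ ranging over a translated cube $Q' = Q + \ell w$ for a fixed unit-ish vector $w$ chosen so that $\frac{(x_Q - y, x_Q - z)}{|\cdot|}$ stays in $B$ for all $(y,z) \in Q \times Q'$ and all $x$ in a further translate $\tilde Q$. Then for $x \in \tilde Q$ one writes
\begin{equation*}
b(x) - c = \int_{Q}\int_{Q'} \bigl(b(x) - b(y)\bigr)\, \frac{1}{K(x-y,x-z)}\, K(x-y,x-z)\, \frac{\mathbbm 1_Q(y)\mathbbm 1_{Q'}(z)}{|Q|\,|Q'|}\, dy\, dz,
\end{equation*}
for a suitable constant $c$ (an average of $b$ over $Q'$-type region), and then expands $\tfrac1K$ in its Fourier series. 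Distributing the sum, each term has the form $a_m$ times $e^{i v_m \cdot (x-y, x-z)/\ell}$ against $K(x-y,x-z)$, and after factoring $e^{i v_m \cdot (x,x)/\ell}$ out of the $x$-integration one recognizes $[b,T]_j$ applied to modulated, normalized indicators $f_m(y) = e^{-i v_m' \cdot y/\ell}\mathbbm 1_Q(y)/|Q|$ and $g_m(z) = e^{-i v_m'' \cdot z /\ell}\mathbbm 1_{Q'}(z)/|Q'|$ (for $j=1$ the roles of the weighting by $b(x)-b(y)$ force the commutator-in-first-slot, for $j=2$ one instead writes $b(x)-b(z)$ and uses $[b,T]_2$). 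These test functions have $L^{p_i}$ norms comparable to $|Q|^{1/p_i - 1}$ by a direct computation.

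Putting this together and integrating in $x$ over $\tilde Q$:
\begin{equation*}
\int_{\tilde Q} |b(x) - c|\, dx \;\le\; \sum_m |a_m| \int_{\tilde Q} \bigl| [b,T]_j(f_m, g_m)(x)\bigr|\, dx \;\le\; |\tilde Q|^{1/q'} \sum_m |a_m| \,\bigl\| [b,T]_j(f_m,g_m)\bigr\|_{L^q},
\end{equation*}
and the assumed boundedness $[b,T]_j : L^{p_1}\times L^{p_2} \to L^q$ bounds each $\|[b,T]_j(f_m,g_m)\|_{L^q} \lesssim \|f_m\|_{p_1}\|g_m\|_{p_2} \approx |Q|^{1/p_1 + 1/p_2 - 2} = |Q|^{1/q + 1/n \cdot 0 \dots}$; tracking the scaling exponents using $\tfrac1q = \tfrac1{p_1}+\tfrac1{p_2}-\tfrac\alpha n$ and the homogeneity degree $-2n+\alpha$ of $K$, the $|Q|$-powers cancel against $|\tilde Q|^{1/q'}$ and $|\tilde Q|$ to leave $\dashint_{\tilde Q}|b - c|\,dx \le C\sum_m|a_m| < \infty$, uniformly in $Q$. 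A standard argument (the oscillation of $b$ over $\tilde Q$ is controlled by $\dashint_{\tilde Q}|b-c|$, and the geometry $Q \mapsto \tilde Q$ is a fixed bi-Lipschitz/translation relation, so one recovers control over genuine cubes by a covering/chaining argument) then yields $b \in \mathrm{BMO}(\nR^n)$.

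The main obstacle I expect is the geometric bookkeeping in choosing the configuration of cubes $Q$, $Q'$, $\tilde Q$: one needs the pairwise differences to avoid the singularity $x=y=z$ and, simultaneously, the normalized vector $(x-y, x-z)/|(x-y,x-z)|$ to lie in the single ball $B$ where the Fourier expansion of $1/K$ converges — all while keeping the $L^{p_i}$ norms of the test functions sharp in $|Q|$ and ensuring the scaling exponents match exactly. A secondary subtlety is that $1/K$ need only be nice on $B$, so the homogeneity rescaling must be invoked carefully to move $B$ to the correct annular location relative to each $Q$; handling the case $\alpha \ne 0$ (the genuinely fractional-integral case) requires keeping the degree $-2n+\alpha$ explicit throughout rather than specializing to the Calder\'on–Zygmund normalization $\alpha = 0$.
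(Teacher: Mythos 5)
Your proposal is correct and follows essentially the same route as the paper's proof: Janson's argument adapted to the bilinear setting, expanding $\frac1K$ in an absolutely convergent Fourier series on a ball, rescaling that expansion by the homogeneity of degree $-2n+\alpha$ to an arbitrary cube, testing $[b,T]_j$ on modulated indicators of suitably translated cubes (with the support condition guaranteeing the kernel representation is valid), and finishing with H\"older for $q,q'$ plus the exponent cancellation coming from $\frac1q=\frac1{p_1}+\frac1{p_2}-\frac\alpha n$. The differences (normalizing the test functions by $|Q|$, taking pointwise absolute values instead of pairing with a sign-modulated dual function, and placing $x$ in a translate of the $y$-cube rather than vice versa) are purely cosmetic, and the scaling bookkeeping you defer does work out exactly as you claim.
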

It is worth noting that the condition on the Fourier coefficients of the kernel will, for example, be satisfied if $K$ is smooth, and this is the assumption that similar arguments have used in the past. For $\alpha=0$, this theorem includes the case where the operator is a bilinear \cz operator, whereas if $0<\alpha<2n$, it includes the case where it is the bilinear fractional integral operator defined by $$I_\alpha(f,g)(x):=\int\int\frac{f(y)g(z)}{(|x-y|^2+|x,z|^2)^{n-\frac\alpha2}}dydz.$$ Our proof also works in the linear case, closing a gap in the literature, since in \cite{Cha} the necessity that $b\in BMO$ for the boundedness of the commutator was only shown when $n-\alpha$ was an even integer.

\subsection{Acknowledgement} This work is a part of the author's doctoral thesis at the University of Kansas, and the author would like to thank Jarod V. Hart, Rodolfo H. Torres, and David Cruz-Uribe for several insightful conversations.

\section{Proofs of the theorems}\label{proof}
The proof of theorem \ref{CZOBMO} uses techniques applied by Janson in \cite{SJ}, modified to suit the multilinear setting and extended for kernels with different homogeneity. We note that by symmetry, it is enough to prove this for $[b,T]_1$.

\begin{proof}[Proof of Theorem \ref{CZOBMO}]
Let $B=B((y_0,z_0),\delta\sqrt{2n})\subset\nR^{2n}$, be the ball for which we can express $\frac1{K(y,z)}$ as an absolutely convergent Fourier series of the form $$\frac1{K(y,z)}=\sum_ja_je^{\nu_j\cdot(y,z)}.$$ The specific vectors, $\nu_j$, will not play a role in this proof. Note that due to the homogeneity of $K$, we can take $(y_0,z_0)$ such that $|(y_0,z_0)|>2\sqrt{n}$ and take $\delta<1$ small such that $\overline B\cap\{0\}=\emptyset$. We do not care about the specific vectors $\nu_j\in \nR^{2n}$, but we will at times express them as $\nu_j=(\nu_j^1,\nu_j^2)\in \nR^n\times\nR^n$.\\
\\Set $y_1=\delta^{-1}y_0$ and $z_1=\delta^{-1}z_0$, and note that
$$\(|y-y_1|^2+|z-z_1|^2\)^{1/2}<\sqrt{2n}\implies \(|\delta y-y_0|^2+|\delta z-z_0|^2\)^{1/2}<\delta\sqrt{2n},$$
and so for all $(y,z)$ satisfying the inequality on the left, we have $$\frac1{K(y,z)}=\frac{\delta^{-2n+\alpha}}{K(\delta y,\delta z)}=\delta^{-2n+\alpha}\sum_ja_je^{i\delta\nu_j\cdot(y,z)}.$$ Let $Q=Q(x_0,r)$ be an arbitrary cube in $\nR^n$. Set $\tilde y=x_0+ry_1$, $\tilde z=x_0+rz_1$, and take $Q'=Q(\tilde y,r)\subset\nR^n$ and $Q''=Q(\tilde z,r)\subset\nR^n$. Then for any $x\in Q$ and $y\in Q',$ we have
$$	\left|\frac{x-y}r-y_1\right|\leq\left|\frac{x-x_0}{r}\right|+\left|\frac{y-\tilde y}{r}\right|\leq\sqrt{n}.$$
The same estimate holds for $x\in Q$ and $z\in Q''$, and so we have $$\(\left|\frac{x-y}r-y_1\right|^2+	\left|\frac{x-z}r-z_1\right|^2\)^{1/2}\leq\sqrt{2n}.$$ Let $\sigma(x)=$sgn$(b(x)-b_{Q'})$. We then have the following,
\begin{align*}
\int_Q|b(x)&-b_{Q'}|~dx\\
&=\int_Q(b(x)-b_{Q'})\sigma(x)~dx\\
&=\frac1{|Q''|}\frac1{|Q'|}\int_Q\int_{Q'}\int_{Q''}(b(x)-b(y))\sigma(x)~dzdydx\\
&=r^{-2n}\int_{\nR^n}\int_{\nR^n}\int_{\nR^n}(b(x)-b(y))\frac{r^{2n-\alpha}K(x-y,x-z)}{K\(\frac{x-y}r,\frac{x-z}r\)}\\
&\quad\quad\cdot\sigma(x)\chi_{Q}(x)\chi_{Q'}(y)\chi_{Q''}(z)~dzdydx\\
&=\delta^{-2n+\alpha}r^{-\alpha}\int\int\int(b(x)-b(y))K(x-y,x-z)\sum_ja_je^{i\frac\delta r\nu_j\cdot(x-y,x-z)}\\
&\quad\quad\cdot\sigma(x)\chi_{Q}(x)\chi_{Q'}(y)\chi_{Q''}(z)~dzdydx
\end{align*}
Let
\begin{align*}
f_j(y)=e^{-i\frac\delta r\nu^1_j\cdot y}\chi_{Q'}(y)\\
g_j(z)=e^{-i\frac\delta r\nu^2_j\cdot z}\chi_{Q''}(z)\\
h_j(x)=e^{i\frac\delta r\nu_j\cdot(x,x)}\sigma(x)\chi_{Q}(x)
\end{align*}
Note that Each of the above functions has an $L^q$ norm of $|Q|^{1/q}$ for any $q\geq1$. Since $Q,\ Q',$ and $Q''$ all have side length $r$, we will have that $Q\cap Q'\cap Q''=\emptyset$ if either $|x_0-\tilde y|>r\sqrt n$ or $|x_0-\tilde z|>r\sqrt n$. By the size condition on $(y_0,z_0)$ we have that either $|y_0|>\sqrt{n}$ or $|z_0|>\sqrt{n}$. If $|y_0|>\sqrt n$, we have $$|x_0-\tilde y|=\left|x_0-x_0+r\frac{y_0}{\delta}\right|\geq r|y_0|> r\sqrt n,$$ with an identical calculation if $z_0>\sqrt n$. Therefore we have that $Q\cap Q'\cap Q''=\emptyset$ since at least one of $Q'$ and $Q''$ must be disjoint from $Q$, and for all $x,\ y,$ and $z$ in the supports of their respective characteristic functions, $(x-y,x-z)$ avoids the singularity of $K$. In particular, this means that the use of the kernel representation of $[b,T](f_j,g_j)$ is valid for all $x\in Q$. Continuing with our above calculations, we have,
\begin{align*}
\int_Q|b(x)-&b_{Q'}|~dx\\
&=\delta^{-2n+\alpha}r^{-\alpha}\sum_ja_j\int h_j(x)\int\int (b(x)-b(y))\\
&\quad\quad\cdot K(x-y,x-z)f_j(y)g_j(z)~dzdydx\\
&=\delta^{-2n+\alpha}|Q|^{-\frac{\alpha}{n}}\sum_ja_j\int h_j(x)[b,T](f_j,g_j)(x)~dx\\
&\leq\delta^{-2n+\alpha}|Q|^{-\frac{\alpha}{n}}\sum_j|a_j|\int |h_j(x)||[b,T](f_j,g_j)(x)|~dx\\
&\leq\delta^{-2n+\alpha}|Q|^{-\frac{\alpha}{n}}\sum_j|a_j|\(\int |h_j(x)|^{q'}~dx\)^{\frac1{q'}}\(\int|[b,T](f_j,g_j)(x)|^q~dx\)^{\frac1q}\\
&\leq\delta^{-2n+\alpha}|Q|^{-\frac{\alpha}{n}}\sum_j|a_j|\|h_j\|_{L^{q'}}\|[b,T]\|_{L^{p_1}\times L^{p_2}\to L^p}\|f_j\|_{L^{p_1}}\|g_j\|_{L^{p_2}}\\
&=\delta^{-2n+\alpha}\|[b,T]\|_{L^{p_1}\times L^{p_2}\to L^p}\sum_j|a_j||Q|^{\frac1{q'}}|Q|^{\frac1{p_1}}|Q|^{\frac1{p_2}}|Q|^{-\frac{\alpha}{n}}\\
&=\delta^{-2n+\alpha}|Q|\|[b,T]\|_{L^{p_1}\times L^{p_2}\to L^p}\sum_j|a_j|
\end{align*}
Recall that $\frac1{|Q|}\int_Q|b(x)-b_Q|~dx\leq \frac2{|Q|}\int_Q|f(x)-C|$ for any $C$, and so this gives us  that for any arbitrary  $Q\subset\nR^n$ we have $$\frac1{|Q|}\int_Q|b(x)-b_Q|\leq\frac2{|Q|}\int_Q|b(x)-b_{Q'}|~dx\leq2\|[b,T]\|_{L^{p_1}\times L^{p_2}\to L^p}\sum_j|a_j|.$$
Therefore $b\in$ BMO$(\nR^n)$
\end{proof}

\section{Corollaries and closing remarks}\label{closing}

In Proposition 3.1 of \cite{PT}, C. P\'erez and R. H. Torres showed that $b\in BMO$ was sufficient to show the boundedness of commutators with $m$-linear \cz\ operators, which we state in a simpler bilinear format without proof.
\begin{proposition}\label{PTthm}
If $T$ is a bilinear \cz operator and $b\in$ BMO, then $[b,T]_j:L^{p_1}\times L^{p_2}\to L^p,$ for $j=1$ or $2$ and $\frac1p=\frac1{p_1}+\frac1{p_2}$ with $1<p,p_1,p_2<\infty$.
\end{proposition}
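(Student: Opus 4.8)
The plan is to run the classical Coifman--Rochberg--Weiss argument in the bilinear form used by P\'erez and Torres in \cite{PT}: prove a pointwise bound for the Fefferman--Stein sharp maximal function of the commutator in terms of maximal operators applied to $T(f,g)$, $f$, and $g$, and then invoke the hypothesis that $T$ is bounded together with the bilinear weak-type endpoint estimates of Grafakos--Torres from \cite{GT}. By symmetry it suffices to treat $[b,T]_1$. Writing $M$ for the Hardy--Littlewood maximal operator, $M^{\#}$ for the sharp maximal operator, and $M_{\delta}h=M(|h|^{\delta})^{1/\delta}$, the heart of the matter is the estimate
$$M^{\#}_{\delta}\big([b,T]_1(f,g)\big)(x)\lesssim \|b\|_{*}\Big(M_{\eta}\big(T(f,g)\big)(x)+M_{s}(f)(x)\,M_{s}(g)(x)\Big),$$
valid for any $0<\delta<\eta<\min(1/2,p)$ and any $1<s<\min(p_1,p_2)$, with implied constant independent of $f$, $g$, $b$, and $x$.

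To establish this, fix a cube $Q$ with center $x_0$, put $Q^{*}=3Q$ and $\lambda=b_{Q^{*}}$, and use the identity $[b,T]_1(f,g)=(b-\lambda)T(f,g)-T((b-\lambda)f,g)$. Splitting $f=f\chi_{Q^{*}}+f\chi_{\nR^n\setminus Q^{*}}=:f_1+f_2$ and likewise $g=g_1+g_2$ breaks $T((b-\lambda)f,g)$ into the four pieces $T((b-\lambda)f_i,g_j)$. I would then subtract from $[b,T]_1(f,g)(z)$ the constant $c_Q$ equal to the sum of the three \emph{non-local} pieces $T((b-\lambda)f_1,g_2)$, $T((b-\lambda)f_2,g_1)$, $T((b-\lambda)f_2,g_2)$ evaluated at $x_0$, and estimate $\big(\frac1{|Q|}\int_Q|[b,T]_1(f,g)(z)-c_Q|^{\delta}dz\big)^{1/\delta}$ term by term. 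For $(b-\lambda)T(f,g)$, H\"older with exponents close to $1$ and the John--Nirenberg inequality (which also controls $|b_Q-b_{Q^{*}}|\lesssim\|b\|_{*}$) give the $M_{\eta}(T(f,g))$ term. For the \emph{local} piece $T((b-\lambda)f_1,g_1)$, Kolmogorov's inequality together with the bilinear weak $L^{1/2,\infty}$ bound of Grafakos--Torres, followed by H\"older to absorb $|b-\lambda|$, yield $\|b\|_{*}M_s(f)(x)M_s(g)(x)$. For each of the three non-local pieces, the difference with its value at $x_0$ is dominated using the regularity of $K$ in the first variable, and the resulting integrals over the dyadic annuli $2^{k+1}Q^{*}\setminus 2^kQ^{*}$ converge because the geometric decay coming from the kernel beats the $O(k)$ growth of $|b_{2^kQ^{*}}-b_{Q^{*}}|$.

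With the pointwise estimate in hand, I would first reduce to $b\in L^{\infty}$ by truncating $b$ (so that $[b,T]_1(f,g)$ is a priori in $L^p$, which legitimizes the Fefferman--Stein inequality) and passing to the limit via Fatou, all constants being independent of $\|b\|_{\infty}$. Then Fefferman--Stein gives $\|[b,T]_1(f,g)\|_{L^p}\lesssim\|M^{\#}_{\delta}([b,T]_1(f,g))\|_{L^p}$; the first term on the right-hand side of the pointwise bound is controlled by boundedness of $M_{\eta}$ on $L^p$ and the hypothesis $T:L^{p_1}\times L^{p_2}\to L^p$, and the second by H\"older's inequality (using $\frac1p=\frac1{p_1}+\frac1{p_2}$) together with boundedness of $M_s$ on $L^{p_1}$ and $L^{p_2}$. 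Combining these yields $\|[b,T]_1(f,g)\|_{L^p}\lesssim\|b\|_{*}\|f\|_{L^{p_1}}\|g\|_{L^{p_2}}$, as desired.

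I expect the main obstacle to be the bookkeeping in the non-local terms: one must check that the bilinear kernel regularity condition genuinely applies to each mixed or far piece --- in particular when both arguments lie outside $Q^{*}$, so that the relevant maximum of the $|z-y_k|$ is large but the two far arguments need not be comparable to each other --- and that the annular sums, weighted by the $O(k)$ BMO oscillation factor, still converge. A secondary but genuine technical point is making the Fefferman--Stein step rigorous, which is precisely why the truncation reduction should be performed first.
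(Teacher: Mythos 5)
Your proposal is correct and is essentially the argument of the source the paper relies on: the paper states this proposition without proof, citing Proposition 3.1 of \cite{PT}, and your scheme --- the pointwise bound $M^{\#}_{\delta}([b,T]_1(f,g))\lesssim \|b\|_{*}\big(M_{\eta}(T(f,g))+M_{s}(f)M_{s}(g)\big)$ obtained from the $(b-\lambda)$ splitting, Kolmogorov's inequality with the $L^1\times L^1\to L^{1/2,\infty}$ endpoint of \cite{GT} for the local piece, kernel regularity summed over dyadic annuli against the $O(k)$ BMO oscillation for the far pieces, and the truncation-plus-Fefferman--Stein step to conclude --- is precisely the P\'erez--Torres sharp maximal function method. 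The two technical points you flag (the mixed/far kernel estimates and the a priori finiteness needed for Fefferman--Stein) are exactly the ones that need care, and your treatment of them is the standard, correct one.
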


This, combined with Theorem \ref{CZOBMO}, immediately gives us the following.

\begin{corollary}\label{czchar}
Let $b\in L^1_{loc}(\nR^n)$, and $T$ a bilinear \cz operator of convolution type with kernel, $K$, a homogeneous function of degree $-2n$Suppose that on some ball, $B$, in $\nR^{2n}$ we have that the Fourier series of $\frac1K$ is absolutely convergent. Then for $1>\frac1p=\frac1{p_1}+\frac1{p_2}$, and $j=1$ or $2$,
$$[b,T]_j:L^{p_1}\times L^{p_2}\to L^p\iff b\in BMO(\nR^n)$$.
\end{corollary}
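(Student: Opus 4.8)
The plan is to prove the two implications separately, each being essentially a direct appeal to a result already in hand, so that the corollary really is ``immediate'' as claimed.

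For the implication $b \in BMO(\nR^n) \implies [b,T]_j : L^{p_1} \times L^{p_2} \to L^p$, I would invoke Proposition \ref{PTthm}: a bilinear \cz operator of convolution type is in particular a bilinear \cz operator, and the exponents satisfy $1 < p, p_1, p_2 < \infty$ (the hypothesis $1 > \frac1p$ in the corollary is nothing but $p > 1$) together with $\frac1p = \frac1{p_1} + \frac1{p_2}$, so the proposition applies verbatim and delivers the desired boundedness of $[b,T]_j$ for $j = 1$ or $2$.

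For the converse, $[b,T]_j : L^{p_1} \times L^{p_2} \to L^p \implies b \in BMO(\nR^n)$, I would apply Theorem \ref{CZOBMO} in the special case $\alpha = 0$. With $\alpha = 0$ the kernel $K$ is homogeneous of degree $-2n + \alpha = -2n$, which is exactly the hypothesis of the corollary; the exponent relation $\frac1q = \frac1{p_1} + \frac1{p_2} - \frac{\alpha}{n}$ collapses to $\frac1q = \frac1{p_1} + \frac1{p_2} = \frac1p$, so that $q = p$ and the target space $L^q$ appearing in Theorem \ref{CZOBMO} is precisely the $L^p$ of the corollary (and $1 > \frac1q$ is again just $p > 1$); and the one remaining hypothesis of the theorem, namely that $\frac1K$ admits an absolutely convergent Fourier series on some ball $B \subset \nR^{2n}$, is assumed word for word in the corollary. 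Hence Theorem \ref{CZOBMO} applies and yields $b \in BMO(\nR^n)$.

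Combining the two implications gives the stated equivalence. I do not anticipate any genuine obstacle here: all of the analytic content is already carried by Theorem \ref{CZOBMO} and by \cite{PT}, and the only thing that needs checking is bookkeeping — that setting $\alpha = 0$ specializes Theorem \ref{CZOBMO} to exactly the necessity half of the statement, and that convolution-type \cz operators with a homogeneous kernel of degree $-2n$ form a subclass lying within the scope of both cited results.
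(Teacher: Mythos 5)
Your proposal is correct and follows exactly the paper's route: the paper deduces the corollary by combining Proposition \ref{PTthm} (sufficiency of $b\in BMO$) with Theorem \ref{CZOBMO} specialized to $\alpha=0$ (necessity). The bookkeeping you check — $q=p$ and the Fourier-series hypothesis carrying over verbatim — is precisely what makes the paper call the corollary immediate.
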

Note that the kernel conditions in the statements of this corollary is not present in the statement of Proposition \ref{PTthm}, whereas the requirement that $T$ be a \cz operator is not needed for Theorem \ref{CZOBMO}.

For $T=I_\alpha$, the sufficiency of $b\in$BMO to conclude the boundedness of $[b,I_\alpha]_i$ was shown for a class of weights which includes the unweighted case by X. Chen and Q. Xue in \cite{CX}, in their Theorem 2.7. As before, we state without proof a particular case of this theorem which suits our needs,
\begin{proposition}\label{CXthm}
    Let $0<\alpha<2n$, and $1\leq p_1,\ p_2,$ and $q$ be such that $\frac1{p_1}+\frac1{p_2}-\frac\alpha{n}=\frac1q$. Then $$\|[b,I_\alpha]_j(f,g)\|_{L^q}\lesssim\|b\|_*\|f\|_{L^{p_1}}\|g\|_{L^{p_2}}$$
    for $j=1$ or $2$.
\end{proposition}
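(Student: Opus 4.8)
The plan is to deduce the estimate from a pointwise control of the Fefferman--Stein sharp maximal function of the commutator, following the approach used for multilinear \cz operators in \cite{PPTT} and for $I_\alpha$ in \cite{CX}. Recall that for $0<\delta<1$ one sets $M^{\#}_{\delta}h=\big(M^{\#}(|h|^{\delta})\big)^{1/\delta}$ and $M_{s}h=\big(M(|h|^{s})\big)^{1/s}$, and that $\|h\|_{L^q}\lesssim\|M^{\#}_{\delta}h\|_{L^q}$ holds for $0<q<\infty$ once the left-hand side is known a priori to be finite. So I would first make the usual reduction to bounded, compactly supported $f$ and $g$ --- for which $[b,I_\alpha]_j(f,g)$ does lie in $L^{q_0}$ for some finite $q_0$, using $b\in L^1_{loc}$ together with the logarithmic growth of BMO functions --- and then aim to prove, for a suitably small $\delta>0$ and some $\delta<s<q$,
$$M^{\#}_{\delta}\big([b,I_\alpha]_1(f,g)\big)(x)\lesssim\|b\|_*\Big(M_{s}\big(I_\alpha(f,g)\big)(x)+\mathcal M_{\alpha,L\log L}(f,g)(x)\Big),$$
where $\mathcal M_{\alpha,L\log L}(f,g)(x)=\sup_{Q\ni x}|Q|^{\alpha/n}\|f\|_{L\log L,Q}\,\|g\|_{L\log L,Q}$ is the bilinear fractional Orlicz maximal operator. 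By symmetry it is enough to handle $[b,I_\alpha]_1$.

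For the pointwise inequality I would fix $x$ and a cube $Q\ni x$, set $\lambda=b_{3Q}$, and split $f=f^0+f^\infty$, $g=g^0+g^\infty$ with $f^0=f\chi_{3Q}$, $g^0=g\chi_{3Q}$. Writing $[b,I_\alpha]_1(f,g)=(b-\lambda)I_\alpha(f,g)-I_\alpha\big((b-\lambda)f,g\big)$ and expanding $I_\alpha\big((b-\lambda)f,g\big)$ into the four pieces indexed by $(f^0,g^0)$, $(f^0,g^\infty)$, $(f^\infty,g^0)$, $(f^\infty,g^\infty)$, I would take the subtracted constant $c_Q$ to be the value at the center of $Q$ of the $(f^\infty,g^\infty)$ piece, and estimate $\big(\dashint_Q|[b,I_\alpha]_1(f,g)-c_Q|^{\delta}\big)^{1/\delta}$ term by term: for $(b-\lambda)I_\alpha(f,g)$, Hölder with a small exponent together with John--Nirenberg gives a bound by $\|b\|_*M_{s}(I_\alpha(f,g))(x)$; for $I_\alpha\big((b-\lambda)f^0,g^0\big)$, the weak-type bound for $I_\alpha$ on the two localized factors (applied through Kolmogorov's inequality, which is where $\delta<1$ enters) combined with the generalized Hölder inequality $\dashint_{3Q}|b-\lambda||f|\lesssim\|b-\lambda\|_{\exp L,3Q}\|f\|_{L\log L,3Q}\lesssim\|b\|_*\|f\|_{L\log L,3Q}$ gives a bound by $\|b\|_*\mathcal M_{\alpha,L\log L}(f,g)(x)$; and for the two mixed pieces and the tail $I_\alpha\big((b-\lambda)f^\infty,g^\infty\big)-c_Q$, the size and $\eps$-regularity of the kernel $(|x-y|^2+|x-z|^2)^{-(n-\alpha/2)}$ on the dyadic annuli around $3Q$ produce a geometric series which is summed against the oscillation growth $|b_{2^k(3Q)}-\lambda|\lesssim k\|b\|_*$, again giving a bound by $\|b\|_*\mathcal M_{\alpha,L\log L}(f,g)(x)$. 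Taking the infimum over $c_Q$ and the supremum over $Q\ni x$ then yields the displayed inequality.

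It then remains to take $L^q$ norms. Choosing $\delta<s<q$, the Hardy--Littlewood maximal operator is bounded on $L^{q/s}$, so $\|M_{s}(I_\alpha(f,g))\|_{L^q}\lesssim\|I_\alpha(f,g)\|_{L^q}\lesssim\|f\|_{L^{p_1}}\|g\|_{L^{p_2}}$ by the classical $L^{p_1}\times L^{p_2}\to L^q$ boundedness of the bilinear fractional integral. Since $p_1,p_2>1$, the $L\log L$ averages in $\mathcal M_{\alpha,L\log L}$ are dominated by $L^{r}$ averages for some $r\in(1,\min(p_1,p_2))$, hence $\mathcal M_{\alpha,L\log L}(f,g)$ is pointwise controlled (after a splitting $\alpha=\alpha_1+\alpha_2$) by a product of two linear fractional maximal operators evaluated on $|f|^{r}$ and $|g|^{r}$; by Hölder and the $L^{p}\to L^{p^{*}}$ bounds for the fractional maximal operator this gives $\|\mathcal M_{\alpha,L\log L}(f,g)\|_{L^q}\lesssim\|f\|_{L^{p_1}}\|g\|_{L^{p_2}}$. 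Combining these with the Fefferman--Stein inequality and removing the a priori reduction by density finishes the argument.

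The hard part will be the pointwise sharp maximal estimate, and within it the exterior contributions: one has to arrange the kernel decay on the annuli $2^{k}(3Q)$ so that it beats the logarithmic growth $|b_{2^k(3Q)}-b_{3Q}|\lesssim k\|b\|_*$, and keep careful track of the interplay between the truncation exponent $\delta<1$ and Kolmogorov's inequality on the local pieces (which forces $\delta$ to be genuinely small, below the weak endpoint exponent $(2-\alpha/n)^{-1}$ of $I_\alpha$ on $L^1\times L^1$). The only remaining delicate point is the true endpoint case $p_1=1$ or $p_2=1$, where each strong bound used above must be replaced by its weak-type analogue throughout.
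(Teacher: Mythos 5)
The paper does not prove this proposition at all: it is stated explicitly without proof, as a particular case of Theorem 2.7 of Chen and Xue \cite{CX}, because the new content of the paper is the converse direction (Theorem \ref{CZOBMO}). So your argument cannot be compared with an in-paper proof, but it is essentially a faithful reconstruction of the proof in the cited literature: the pointwise estimate $M^{\#}_{\delta}\big([b,I_\alpha]_1(f,g)\big)\lesssim\|b\|_*\big(M_{s}(I_\alpha(f,g))+\mathcal M_{\alpha,L\log L}(f,g)\big)$ with $\delta$ below the weak endpoint exponent $n/(2n-\alpha)$, obtained from the $f=f^0+f^\infty$, $g=g^0+g^\infty$ splitting, Kolmogorov's inequality plus the $L^1\times L^1\to L^{n/(2n-\alpha),\infty}$ bound on the local piece, the $\exp L$--$L\log L$ H\"older inequality, and annular kernel decay summed against $|b_{2^k(3Q)}-b_{3Q}|\lesssim k\|b\|_*$ on the tails, followed by Fefferman--Stein and the $L^{p_1}\times L^{p_2}\to L^q$ bounds for $I_\alpha$ and for $\mathcal M_{\alpha,L\log L}$ (via the $L^r$-domination and the splitting $\alpha=\alpha_1+\alpha_2$), is precisely the scheme of \cite{CX}, and of \cite{LOPTTG,PPTT} in the \cz case $\alpha=0$. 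What your write-up buys is self-containedness; what the paper's citation buys is brevity, the result being known. One caveat: the proposition as stated allows $p_1=1$ or $p_2=1$, and your closing suggestion to treat that case by substituting weak-type bounds cannot produce the asserted strong $L^q$ inequality --- strong type already fails for $I_\alpha$ itself at that endpoint, and commutators with BMO symbols are in general not even of weak type there, satisfying only $L\log L$-type endpoint estimates --- so your proof, like Chen--Xue's theorem and the only use made of the proposition here (Corollary \ref{fracBMO}), genuinely covers the range $1<p_1,p_2$.
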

The kernel of $I_\alpha$ has precisely the homogeneity required by Theorem \ref{CZOBMO}, and the reciprocal of the convolution kernel of $I_\alpha$, $(|y|^2+|z|^2)^{n-\alpha/2}$, is smooth away from the origin and so its Fourier series will indeed have regions on which it is absolutely convergent. These facts give us the following result.

\begin{corollary}\label{fracBMO}
For $b\in L^1_{loc}$, $0<\alpha<2n$ and $1<p_1,p_2,$ and $q$ satisfying $$\frac1{p_1}+\frac1{p_2}-\frac\alpha n=\frac1q<1,$$ then
$$\|[b,I_\alpha]_j\|_{p_1\times p_2\to q}\approx\|b\|_*\quad \text{for } j=1\text{ or }2$$
In particular, for $j=1$ or $2$, $$[b,I_\alpha]_j:L^{p_1}\times L^{p_2}\to L^q\iff b\in\text{BMO}.$$
\end{corollary}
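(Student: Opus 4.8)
The plan is to deduce Corollary \ref{fracBMO} by matching $I_\alpha$ against the structural hypotheses of Theorem \ref{CZOBMO} for the upper bound $\|b\|_*\lesssim\|[b,I_\alpha]_j\|$ and by quoting Proposition \ref{CXthm} for the reverse inequality. Write $K(y,z)=(|y|^2+|z|^2)^{-n+\alpha/2}$ for the convolution kernel of $I_\alpha$, so that $I_\alpha(f,g)(x)=\int\int K(x-y,x-z)f(y)g(z)\,dy\,dz$. First I would check the homogeneity: for $t>0$, $K(ty,tz)=(t^2(|y|^2+|z|^2))^{-n+\alpha/2}=t^{-2n+\alpha}K(y,z)$, which is exactly the degree $-2n+\alpha$ required by Theorem \ref{CZOBMO}.

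The next step is to produce a ball $B\subset\nR^{2n}$ on which $1/K$ has an absolutely convergent Fourier series. Since $1/K(y,z)=(|y|^2+|z|^2)^{n-\alpha/2}=(|y|^2+|z|^2)^{s}$ with $s=n-\alpha/2>0$, and $(y,z)\mapsto|y|^2+|z|^2$ is a smooth, strictly positive function on $\nR^{2n}\setminus\{0\}$, the function $1/K$ is $C^\infty$ on $\nR^{2n}\setminus\{0\}$ (no integrality hypothesis on $2s$ is needed here, in contrast to \cite{Cha}). Picking any closed ball $\overline B$ with $0\notin\overline B$, the function $1/K$ is smooth on a neighborhood of $\overline B$, so a standard cutoff-and-periodize construction yields a $C^\infty$ function on a torus agreeing with $1/K$ on $B$; its Fourier coefficients decay faster than any polynomial, hence the Fourier series converges absolutely on $B$. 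This verifies the hypotheses of Theorem \ref{CZOBMO} for $T=I_\alpha$ with the present value of $\alpha$.

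Applying Theorem \ref{CZOBMO} then gives, for $j=1$ or $2$ and $1>\frac1q=\frac1{p_1}+\frac1{p_2}-\frac\alpha n$, that $[b,I_\alpha]_j:L^{p_1}\times L^{p_2}\to L^q$ implies $b\in$ BMO; moreover, tracking the constants produced in that proof (of the form $\delta^{-2n+\alpha}\sum_j|a_j|$) yields the quantitative bound $\|b\|_*\lesssim\|[b,I_\alpha]_j\|_{p_1\times p_2\to q}$. In the other direction, Proposition \ref{CXthm} gives $\|[b,I_\alpha]_j(f,g)\|_{L^q}\lesssim\|b\|_*\|f\|_{L^{p_1}}\|g\|_{L^{p_2}}$, i.e. $\|[b,I_\alpha]_j\|_{p_1\times p_2\to q}\lesssim\|b\|_*$. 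Combining the two inequalities gives $\|[b,I_\alpha]_j\|_{p_1\times p_2\to q}\approx\|b\|_*$, and in particular the boundedness of $[b,I_\alpha]_j$ is equivalent to $b\in$ BMO.

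As for the main obstacle: there is no deep difficulty once Theorem \ref{CZOBMO} and Proposition \ref{CXthm} are in hand — the corollary is essentially a verification of hypotheses. The one point deserving care is the Fourier-series claim: one must be sure that smoothness of $1/K$ away from the origin genuinely delivers an absolutely convergent Fourier expansion on a ball, which is why the cutoff-and-periodize step (rather than a naive periodization of $1/K$, which is undefined at the origin) is the correct way to phrase it. The homogeneity computation and the bookkeeping of constants are routine.
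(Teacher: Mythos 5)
Your proposal is correct and follows essentially the same route as the paper: verify that the kernel of $I_\alpha$ has homogeneity $-2n+\alpha$ and that $1/K=(|y|^2+|z|^2)^{n-\alpha/2}$, being smooth away from the origin, admits an absolutely convergent Fourier series on a ball avoiding the origin, then apply Theorem \ref{CZOBMO} for the necessity and Proposition \ref{CXthm} for the sufficiency. Your cutoff-and-periodize justification of the Fourier-series hypothesis simply makes explicit what the paper asserts in passing.
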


With regards to our main theorem, two key things should be noted. First, the proof easily generalizes to commutators with the $m$-linear operators and homogeneous kernels of degree $-mn+\alpha$. The original statements of Proposition \ref{PTthm} in \cite{PT} and Proposition \ref{CXthm} in \cite{CX} ares for $m$-linear commutators, so Corollaries \ref{czchar} and \ref{fracBMO} hold in the $m$-linear setting as well. Second, note that we only assume the commutator is bounded, not the underlying operator, and this allows $\alpha<0$.

Finally, we observe that since our proof required the use of H\"older's inequality with $q$ and $q'$, the exponent in our target space must be larger than 1. We do not know if it is possible to characterize BMO in terms of the boundedness of commutators for
$L^{p_1}\times L^{p_2}\to L^q$ for $\frac12<q<1$. This is of interest because bounds of this form have indeed been shown. In particular, in \cite{LOPTTG}, Lerner et al. showed that commutators with $m$-linear \cz operators are bounded from $\prod_{j=1}^mL^{p_j}$ to $L^p$, for any $1<p_1,...,p_m$ such that $\frac1p=\sum_{j=1}^m\frac1{p_j}$, provided that $b\in$ BMO. In \cite{T}, Tang obtained this result for commutators of vector valued multilinear \cz operators, again without the restriction that $p$ be greater than 1.

\end{document}